\documentclass[preprint,12pt]{elsarticle}

\usepackage[T1]{fontenc}
\usepackage{lmodern}
\usepackage{amsmath,amssymb,amsthm,mathtools}
\usepackage{booktabs}
\usepackage{enumitem}
\usepackage[colorlinks=true,linkcolor=blue,citecolor=blue,urlcolor=blue]{hyperref}
\usepackage[nameinlink,capitalize,noabbrev]{cleveref}
\usepackage{microtype}

\journal{Linear Algebra and its Applications}
\biboptions{numbers,sort&compress}

\numberwithin{equation}{section}

\newtheorem{theorem}{Theorem}[section]
\newtheorem{proposition}[theorem]{Proposition}
\newtheorem{lemma}[theorem]{Lemma}
\newtheorem{corollary}[theorem]{Corollary}
\newtheorem{example}[theorem]{Example}
\theoremstyle{definition}

\theoremstyle{remark}
\newtheorem{remark}[theorem]{Remark}

\crefname{theorem}{Theorem}{Theorems}
\Crefname{theorem}{Theorem}{Theorems}
\crefname{proposition}{Proposition}{Propositions}
\Crefname{proposition}{Proposition}{Propositions}
\crefname{lemma}{Lemma}{Lemmas}
\Crefname{lemma}{Lemma}{Lemmas}
\crefname{corollary}{Corollary}{Corollaries}
\Crefname{corollary}{Corollary}{Corollaries}
\crefname{example}{Example}{Examples}
\Crefname{example}{Example}{Examples}
\crefname{remark}{Remark}{Remarks}
\Crefname{remark}{Remark}{Remarks}

\newcommand{\R}{\mathbb{R}}
\newcommand{\one}{\mathbf{1}}
\newcommand{\cof}{\operatorname{cof}}
\newcommand{\diag}{\operatorname{diag}}
\newcommand{\tr}{\operatorname{tr}}
\newcommand{\FF}{\mathcal{F}}

\begin{document}

\begin{frontmatter}

\title{Principal minors of effective-resistance matrices and local resistance radii}

\author[ytu]{Guangfu Wang\corref{cor1}}
\ead{gfwang@ytu.edu.cn}
\cortext[cor1]{Corresponding author.}

\address[ytu]{School of Mathematics and Information Sciences, Yantai University, Yantai, Shandong Province 264005, China}

\begin{abstract}
Let $G$ be a finite connected weighted graph and let $R$ be its effective-resistance matrix.  For every nonempty vertex set $S$, we factor the cofactor sum and determinant of the principal resistance submatrix $R[S]$ into an enumerative term and a boundary potential-theoretic term.  If $\tau(G)$ is the weighted spanning tree enumerator and $\kappa_G(S)$ is the weighted enumerator of $S$-rooted spanning forests, then
\[
        \cof R[S]=(-2)^{|S|-1}\kappa_G(S)/\tau(G).
\]
After Kron reduction to $S$, with reduced Laplacian $K=L^S$, $Q=K^+$, and $q=\diag(Q)$, the remaining normalized factor is
\[
        \det R[S]/\cof R[S]
        =\frac{2}{|S|}\tr Q+\frac12 q^{\mathsf T}Kq.
\]
The cofactor factor is a principal specialization of known resistance-minor identities; the contribution here is the boundary/Kron-reduction factorization and local radius calculus.  Equivalently, the normalized factor is the maximum of $u^{\mathsf T}R[S]u$ over all $u\in\R^S$ satisfying $\one^{\mathsf T}u=1$.  This optimization viewpoint yields monotonicity under enlargement of $S$, an exact one-point update formula, and a support criterion for equality.  Small star examples show that the resulting set function is neither submodular nor supermodular in general.
\end{abstract}

\begin{keyword}
effective resistance \sep resistance matrix \sep principal minor \sep rooted forest \sep Kron reduction \sep resistance radius \sep graph Laplacian
\MSC[2020] 05C50 \sep 05C12 \sep 05C30 \sep 15A15 \sep 31C20
\end{keyword}

\end{frontmatter}

\section{Introduction}\label{sec:introduction}

Effective resistance is a central invariant of a finite weighted graph.  It was introduced through electrical network theory: when an edge of conductance $c_e$ is interpreted as a resistor of resistance $1/c_e$, the effective resistance between two vertices is the voltage drop required to send one unit of current between them.  The same quantity also appears in random walks, spectral graph theory, numerical linear algebra, network analysis, and mathematical chemistry.  If $L$ is the weighted Laplacian of a connected graph and $L^+$ is its Moore--Penrose pseudoinverse, then
\[
        R_{ij}=(e_i-e_j)^{\mathsf T}L^+(e_i-e_j)
\]
defines the effective resistance between vertices $i$ and $j$.  The matrix $R=(R_{ij})$ is the effective-resistance matrix.  Its entries form a metric, while its matrix structure records more information than the metric alone, including Laplacian pseudoinverse data and the Euclidean simplex geometry associated with the graph; see, for example, \cite{KleinRandic1993,Bapat2010,Devriendt2022LAA,Devriendt2022Thesis}.

The present paper studies principal submatrices $R[S]$, where $S$ is a nonempty set of vertices.  Such submatrices arise naturally when only a boundary set is observed, or when one wants to understand how global resistance geometry changes after restricting to a prescribed group of vertices.  Their determinants and cofactor sums are also natural linear-algebraic invariants: the cofactor sum measures the restriction of the quadratic form to the zero-sum hyperplane, while the ratio $\det R[S]/\cof R[S]$ is a resistance-radius type quantity.  Thus principal resistance minors connect three viewpoints that are usually treated separately: forest enumeration, Schur complements, and concave quadratic optimization.

A classical starting point is the determinant formula of Graham and Pollak \cite{GrahamPollak1971}.  If $G$ is a tree on $n$ vertices and $D$ is its path-distance matrix, then
\[
        \det D=(-1)^{n-1}2^{n-2}(n-1).
\]
The independence of this determinant from the shape of the tree motivated a substantial literature on distance matrices and Laplacians.  Related normalized determinant ratios already appear in the block-decomposition formula of Graham, Hoffman, and Hosoya \cite{GrahamHoffmanHosoya1977}, and the characteristic-polynomial viewpoint for tree distance matrices was developed by Graham and Lov\'{a}sz \cite{GrahamLovasz1978}.  A weighted version for tree distance matrices was proved by Bapat, Kirkland, and Neumann \cite{BapatKirklandNeumann2005}.  More recently, Richman, Shokrieh, and Wu \cite{RichmanShokriehWuPrincipal} obtained formulas for all principal minors $D[S]$ of a tree distance matrix in terms of rooted spanning forests, together with a monotonicity theorem for the normalized principal minors $\det D[S]/\cof D[S]$.  Guti\'{e}rrez and Lillo \cite{GutierrezLillo2024} gave a related combinatorial treatment of principal tree-distance minors.  Since tree distance is exactly effective resistance on a tree, it is natural to ask which parts of this picture persist for arbitrary weighted networks.

For general graphs, cycles change the nature of the answer.  Identities for minors of Laplacian, resistance, and distance matrices were obtained by Bapat and Sivasubramanian \cite{BapatSivasubramanian2011}, and were extended to graphs with arbitrary weights by Ali, Atik, and Bapat \cite{AliAtikBapat2020}.  Bapat's formulas for the determinant and inverse of a full resistance matrix \cite{Bapat2004}, as well as recent determinant formulas of Richman, Shokrieh, and Wu \cite{RichmanShokriehWuResistanceDet}, are directly relevant.  In this setting, Kron reduction provides the natural way to separate boundary data from interior graph structure, in accordance with the electrical principle that effective resistances on a boundary are preserved by Schur-complement elimination of interior vertices; see \cite{DorflerBullo2013,Devriendt2022LAA}.

\subsection{Relation to previous work and novelty}\label{subsec:related}

The cofactor identity used below should be read as a principal-submatrix specialization of known cofactor-sum identities for resistance matrices, especially those of Bapat and Sivasubramanian and their weighted extension by Ali, Atik, and Bapat \cite{BapatSivasubramanian2011,AliAtikBapat2020}.  We nevertheless include a short proof because the Kron-reduction argument fixes the normalization and exposes the boundary forest factor that is needed for the determinant factorization.

The normalized factor $\det R[S]/\cof R[S]$ belongs to the resistance-radius framework developed by Devriendt and Devriendt--Lambiotte \cite{Devriendt2022Thesis,DevriendtLambiotte2022}.  The same optimization perspective also appears in the tree-distance setting in the work of Richman, Shokrieh, and Wu \cite{RichmanShokriehWuPrincipal}.  Their resistance-determinant preprint also points to closely related principal-minor questions \cite{RichmanShokriehWuResistanceDet}.  Thus the main contribution here is not the cofactor formula alone.  Rather, the paper combines the cofactor/minor identities with Kron reduction and the resistance-radius variational formula to obtain a boundary factorization for arbitrary weighted networks, together with an exact one-point update formula and support criterion for equality in the resulting monotonicity statement.

\subsection{Main results and contribution}

Let $G=(V,E,c)$ be a finite connected graph with positive edge conductances $c_e$.  Let
\[
        \tau(G)=\sum_T\prod_{e\in T}c_e
\]
be the weighted spanning tree enumerator, where the sum is over all spanning trees of $G$.  For a nonempty set $S\subseteq V$, let $\kappa_G(S)$ be the weighted enumerator of $S$-rooted spanning forests, namely
\[
        \kappa_G(S)=\sum_{F\in\FF(G;S)}\prod_{e\in F}c_e,
\]
where each component of $F$ contains exactly one vertex of $S$.  Let $L^S$ denote the Kron-reduced Laplacian obtained from $L$ by eliminating $V\setminus S$.

\begin{theorem}\label{thm:intro-main}
For every nonempty $S\subseteq V$,
\begin{equation}\label{eq:intro-cof}
        \cof R[S]=(-2)^{|S|-1}\frac{\kappa_G(S)}{\tau(G)},
\end{equation}
and, writing $K=L^S$, $Q=K^+$, and $q=\diag(Q)$,
\begin{equation}\label{eq:intro-det}
        \det R[S]
        =(-2)^{|S|-1}\frac{\kappa_G(S)}{\tau(G)}
        \left(\frac{2}{|S|}\tr Q+\frac12 q^{\mathsf T}Kq\right).
\end{equation}
Here $\cof$ denotes the sum of all cofactors.  When $|S|=1$, the factor in parentheses in \eqref{eq:intro-det} is interpreted as $0$.
\end{theorem}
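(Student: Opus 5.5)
The plan is to reduce everything to the boundary set $S$ via Kron reduction, and then work entirely with the $s\times s$ matrices $K=L^S$ and $Q=K^+$, where $s=|S|$. The first step is the electrical principle recalled in the introduction: Schur-complement elimination of $V\setminus S$ preserves effective resistances between vertices of $S$. Hence $R[S]$ is the full resistance matrix of the reduced network with Laplacian $K$, and
\[
R[S]=q\one^{\mathsf T}+\one q^{\mathsf T}-2Q .
\]
Here $K$ is a Laplacian on $S$ (symmetric, zero row sums, kernel spanned by $\one$ since $G$ is connected), and $Q\one=0$.

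For the cofactor sum I will use the standard identity $\cof A=\det\begin{pmatrix}0&\one^{\mathsf T}\\ \one& A\end{pmatrix}\cdot(-1)$, or equivalently $\cof A = \det(PAP+\ldots)$ restricted to the zero-sum hyperplane. Concretely, if $A$ is symmetric and $U$ has orthonormal columns spanning $\one^\perp$, then $\cof A = (-1)^{s-1}\,s\cdot\det(-U^{\mathsf T}AU)\cdot(\ldots)$. I would rather argue via the bordered determinant. Subtracting multiples of the border row and column kills the rank-one terms $q\one^{\mathsf T}+\one q^{\mathsf T}$, so
\[
\cof R[S]=\cof(-2Q)=(-2)^{s-1}\cof Q .
\]
For a symmetric matrix with $Q\one=0$, $\cof Q=s^{-1}\cdot s\cdot\prod_{\lambda\neq0}\lambda(Q)\cdot s$, up to the correct normalization. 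Using $Q=K^+$, this becomes $\cof Q = s^2/\prod_{\lambda\ne0}\lambda(K)=1/\tau(K)$, where $\tau(K)=\prod_{\lambda\neq 0}\lambda(K)/s$ is the weighted tree enumerator of the reduced network (matrix-tree theorem). It then remains to show $\tau(L^S)=\tau(G)/\kappa_G(S)$. This follows from the Schur determinant formula: deleting one vertex $v\in S$, $\det L[V\setminus v]=\det L[V\setminus S]\cdot\det K[S\setminus v]$. Moreover $\det L[V\setminus S]=\kappa_G(S)$ by the all-minors matrix-tree theorem, and $\det L[V\setminus v]=\tau(G)$. This yields \eqref{eq:intro-cof}.

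For the determinant, I would use the bordered-matrix identity valid when $\cof A\ne0$:
\[
\det A=\cof A\cdot\bigl(-\text{Schur value}\bigr),
\]
namely $\det A/\cof A = \max\{u^{\mathsf T}Au:\one^{\mathsf T}u=1\}$ when $A$ is negative definite on $\one^\perp$. The negative definiteness holds since $-2Q$ is negative definite on $\one^\perp$, and the rank-one terms vanish there. The maximization is then a Lagrange problem: the optimal $u$ solves $Au=\mu\one$ with $\one^{\mathsf T}u=1$, and the maximum value is $\mu$. This gives $\det A=\mu\cof A$ by Cramer's rule applied to the bordered system.

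Next I compute $\mu$ for $A=q\one^{\mathsf T}+\one q^{\mathsf T}-2Q$. Write $u=\frac1s\one+w$ with $w\perp\one$. Then
\[
u^{\mathsf T}Au=2q^{\mathsf T}u-2w^{\mathsf T}Qw=\tfrac{2}{s}\tr Q+2q^{\mathsf T}w-2w^{\mathsf T}Qw .
\]
Maximizing over $w\in\one^\perp$ gives $Qw=\tfrac12 Pq$, where $P$ is the projector onto $\one^\perp$. Hence $w=\tfrac12 KPq=\tfrac12 Kq$, since $K\one=0$ and $KQ=P$. The maximal gain is $\tfrac12 q^{\mathsf T}Kq$, which produces the parenthetical factor in \eqref{eq:intro-det}. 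When $s=1$ we have $\det R[S]=0$, matching the stated convention.

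The main obstacle I expect is keeping the normalization constants exact in the step $\cof Q=1/\tau(K)$. The factors of $s$ from the pseudoinverse and from the matrix-tree theorem must cancel precisely, and the sign of the bordered-determinant identity must be tracked. I would verify these on $s=2$ as a check: there $R[S]$ has off-diagonal entry $r$, so $\cof=-2r$, $\tau(K)=1/r$, and $\det=-r^2$.
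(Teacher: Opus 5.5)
Your proposal is correct and is essentially the paper's proof. Kron reduction makes $R[S]$ the full resistance matrix of $K$, and the rank-one terms are removed to get $\cof R[S]=(-2)^{s-1}\cof Q$; the paper does this by restricting to $\one^\perp$ via Lemma~\ref{lem:cof-restriction} where you use a bordered determinant, and your Cramer's-rule step has the small bonus of not needing $R[S]$ invertible. The Schur complement of $L_{II}$ gives $\tau(K)=\tau(G)/\kappa_G(S)$, and $\det R[S]/\cof R[S]$ is identified with $\max_{\one^{\mathsf T}u=1}u^{\mathsf T}R[S]u$, attained at $u=\frac1s\one+\frac12Kq$, exactly as in the paper.

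The one slip is in the normalization you flagged. In fact $\cof Q=s\prod_{\lambda\neq0}\lambda(Q)=s/\prod_{\lambda\neq0}\lambda(K)$, not $s^2/\prod_{\lambda\neq0}\lambda(K)$. Since $\tau(K)=\frac1s\prod_{\lambda\neq0}\lambda(K)$, this equals $1/\tau(K)$ exactly, which is what your $s=2$ check confirms.
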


Theorem~\ref{thm:intro-main} should be interpreted as a boundary factorization: the first identity is compatible with the known minor identities cited above, while the proof below derives it through Kron reduction.  The determinant then separates into two parts.  The factor $(-2)^{|S|-1}\kappa_G(S)/\tau(G)$ is purely enumerative.  The second factor is intrinsic to the reduced boundary network.  Define
\[
        \rho_G(S):=\frac{\det R[S]}{\cof R[S]}.
\]
Then
\begin{equation}\label{eq:intro-rho}
        \rho_G(S)=\frac{2}{|S|}\tr(L^S)^+
        +\frac12 q^{\mathsf T}L^Sq,
        \qquad q=\diag((L^S)^+),
\end{equation}
and this number has the variational interpretation
\begin{equation}\label{eq:intro-opt}
        \rho_G(S)=\max\{u^{\mathsf T}R[S]u:\; u\in\R^S,\; \one^{\mathsf T}u=1\}.
\end{equation}
In the terminology of resistance geometry, this is a resistance-radius type quantity \cite{Devriendt2022Thesis,DevriendtLambiotte2022}.  For trees, the formula is compatible with the normalized-minor factor in \cite{RichmanShokriehWuPrincipal}; the additional tree-specific feature there is a fully combinatorial expansion of this factor.

The optimization formula also gives a local calculus for the set function $S\mapsto\rho_G(S)$.  It immediately implies monotonicity: if $\varnothing\neq A\subseteq B\subseteq V$, then $\rho_G(A)\le \rho_G(B)$.  More precisely, for $x\notin S$, put $A=R[S]$ and $b=R[S,x]$.  If $|S|\ge2$, define
\[
        a=b^{\mathsf T}A^{-1}\one,
        \qquad
        \eta=b^{\mathsf T}A^{-1}b.
\]
Then $\eta>0$ and
\begin{equation}\label{eq:intro-update}
        \frac{1}{\rho_G(S\cup\{x\})}
        =\frac{1}{\rho_G(S)}-\frac{(a-1)^2}{\eta}.
\end{equation}
Consequently, equality in the one-point monotonicity step is equivalent to $a=1$.  If $u_S$ is the unique optimizer in \eqref{eq:intro-opt}, this equality condition can be written as
\[
        \sum_{s\in S}u_S(s)R_{xs}=\rho_G(S).
\]
For arbitrary $A\subseteq B$, equality $\rho_G(A)=\rho_G(B)$ holds exactly when the optimizer for $B$ is supported on $A$.  Finally, small star examples show that no unconditional submodularity or supermodularity law holds for $S\mapsto\rho_G(S)$.  The correct substitute is therefore the exact local formula \eqref{eq:intro-update} and the support criterion for equality.

\subsection{Organization}

\Cref{sec:preliminaries} recalls weighted Laplacians, effective resistance, rooted forests, cofactor sums, and Kron reduction.  \Cref{sec:principal} proves the cofactor formula, the Kron-reduced expression for $\rho_G(S)$, and the determinant formula for $R[S]$.  \Cref{sec:radius} proves the optimization theorem, monotonicity, equality criteria, and the one-point update formula.  \Cref{sec:examples} gives examples, including the failure of submodularity and supermodularity.

\section{Preliminaries}\label{sec:preliminaries}

Throughout the paper, $G=(V,E,c)$ is a finite connected undirected graph with positive edge conductances $c_e$.  Parallel edges are allowed; loops play no role in effective resistance and are omitted.  We write $n=|V|$.  After choosing an arbitrary orientation of the edges, let $B\in\R^{V\times E}$ be the incidence matrix and let $C=\diag(c_e)$.  The weighted Laplacian is
\[
        L=BCB^{\mathsf T}.
\]
It is symmetric positive semidefinite, satisfies $L\one=0$, and has kernel $\R\one$ because $G$ is connected.  Its Moore--Penrose pseudoinverse is denoted $L^+$.

For a finite set $X$, the notation $\R^X$ means real-valued column vectors indexed by $X$.  The all-ones vector $\one$ always has the dimension determined by context, and $\one^\perp$ denotes the subspace of vectors whose coordinates sum to zero.  If a matrix $M$ is indexed by a finite set $X$ and $A,B\subseteq X$, then $M[A,B]$ denotes the submatrix with rows in $A$ and columns in $B$, and $M[A]=M[A,A]$.  In block formulas we also write $M_{AB}$ for $M[A,B]$.  For $u\in\R^S$, the symbol $\widetilde u\in\R^V$ denotes extension by zero outside $S$.

\subsection{Effective resistance matrices}

The effective resistance between vertices $i,j\in V$ is
\[
        R_{ij}=(e_i-e_j)^{\mathsf T}L^+(e_i-e_j),
\]
where $e_i$ and $e_j$ are the corresponding standard coordinate vectors.
Let $z=\diag(L^+)$.  Then the full resistance matrix can be written as
\begin{equation}\label{eq:R-from-Q}
        R=z\one^{\mathsf T}+\one z^{\mathsf T}-2L^+.
\end{equation}
For $S\subseteq V$, we write $R[S]$ for the principal submatrix indexed by $S$.  We also use the following convention: if $A$ is a square matrix, then
\[
        \cof A=\sum_{i,j}(-1)^{i+j}\det A(i|j),
\]
where $A(i|j)$ is obtained from $A$ by deleting row $i$ and column $j$.  Thus, if $A$ is invertible,
\begin{equation}\label{eq:cof-inv}
        \cof A=(\det A)\one^{\mathsf T}A^{-1}\one.
\end{equation}
For a $1\times 1$ matrix, the cofactor sum is the determinant of the empty matrix, hence equals $1$.

\begin{lemma}\label{lem:negative-type}
Let $S\subseteq V$ and $|S|=k\geq 2$.  Then $R[S]$ has inertia $(1,k-1,0)$: one positive eigenvalue, $k-1$ negative eigenvalues, and no zero eigenvalues.  Moreover,
\begin{equation}\label{eq:negative-type}
        u^{\mathsf T}R[S]u=-2\widetilde u^{\mathsf T}L^+\widetilde u<0
\end{equation}
for every nonzero $u\in\R^S$ satisfying $\one^{\mathsf T}u=0$, where $\widetilde u\in\R^V$ is the extension by zero outside $S$.
\end{lemma}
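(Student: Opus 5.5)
The plan is to first establish the quadratic-form identity \eqref{eq:negative-type} and then deduce the inertia from it by a Courant--Fischer argument.

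\emph{Step 1: the identity.} Let $u\in\R^S$ with $\one^{\mathsf T}u=0$ and set $\widetilde u$ as in the statement. Since $R[S]$ is the principal submatrix of $R$, we have $u^{\mathsf T}R[S]u=\widetilde u^{\mathsf T}R\widetilde u$, and $\one^{\mathsf T}\widetilde u=0$. Substituting the representation \eqref{eq:R-from-Q} gives
\[
\widetilde u^{\mathsf T}R\widetilde u=2(z^{\mathsf T}\widetilde u)(\one^{\mathsf T}\widetilde u)-2\widetilde u^{\mathsf T}L^+\widetilde u=-2\widetilde u^{\mathsf T}L^+\widetilde u,
\]
because the two rank-one terms vanish on $\one^\perp$.

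\emph{Step 2: strict negativity.} Here connectivity enters. Since $G$ is connected, $L^+$ is positive semidefinite with kernel exactly $\R\one$, so $L^+$ is positive definite on $\one^\perp$. If $u\neq0$, then $\widetilde u\neq0$ and $\widetilde u\in\one^\perp$, hence $\widetilde u^{\mathsf T}L^+\widetilde u>0$, which proves the strict inequality in \eqref{eq:negative-type}.

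\emph{Step 3: the inertia.} The form $u\mapsto u^{\mathsf T}R[S]u$ is negative definite on the $(k-1)$-dimensional subspace $\one^\perp\subseteq\R^S$. By Courant--Fischer (the min--max characterization), $R[S]$ therefore has at least $k-1$ negative eigenvalues. For a positive eigenvalue, note that the diagonal of $R[S]$ is zero, so $\tr R[S]=0$; with $k\ge2$ and at least one strictly negative eigenvalue, the remaining eigenvalue must be positive. Alternatively, one can use $e_s+e_t$ for distinct $s,t\in S$: it gives $2R_{st}>0$, since $R_{st}=(e_s-e_t)^{\mathsf T}L^+(e_s-e_t)>0$ because $e_s-e_t\in\one^\perp\setminus\{0\}$. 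Either way, exactly one eigenvalue is positive and $k-1$ are negative, so none is zero and the inertia is $(1,k-1,0)$.

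The argument is routine. The only point requiring care is that the kernel of $L^+$ equals that of $L$, namely $\R\one$, which uses connectivity and gives the strict positivity in Step 2; I do not expect a real obstacle.
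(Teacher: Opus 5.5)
Your proposal is correct and follows the paper's proof essentially step by step: the rank-one terms of \eqref{eq:R-from-Q} vanish on $\one^\perp$, $L^+$ is positive definite on $\one^\perp$, negative definiteness on a codimension-one subspace gives $k-1$ negative eigenvalues, and the zero trace forces the remaining eigenvalue to be positive. Your alternative check via $e_s+e_t$, which gives $2R_{st}>0$, is a valid extra way to exhibit the positive eigenvalue.
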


\begin{proof}
If $\one^{\mathsf T}u=0$, then the two rank-one terms in \eqref{eq:R-from-Q} vanish on $u$, and \eqref{eq:negative-type} follows.  Since $\widetilde u\in\one^\perp$ and $L^+$ is positive definite on $\one^\perp$, the inequality is strict for $u\neq0$.

Thus the quadratic form defined by $R[S]$ is negative definite on the codimension-one subspace $\one^\perp\subset\R^S$.  Hence $R[S]$ has at least $k-1$ negative eigenvalues and at most one nonnegative eigenvalue.  Since the diagonal entries of $R[S]$ are zero, its trace is zero; and since $R[S]$ is not the zero matrix for $k\ge2$, the remaining eigenvalue is positive.  This proves the stated inertia.
\end{proof}

\subsection{Rooted forests and the matrix-tree theorem}

Let $\tau(G)$ be the weighted spanning tree enumerator
\[
        \tau(G)=\sum_{T}\prod_{e\in T}c_e.
\]
Equivalently, by Kirchhoff's matrix-tree theorem, $\tau(G)$ is any diagonal cofactor of $L$.

For a nonempty $S\subseteq V$, an $S$-rooted spanning forest is a spanning forest in which each connected component contains exactly one vertex of $S$.  Let $\FF(G;S)$ be the set of all such forests and define
\[
        \kappa_G(S)=\sum_{F\in\FF(G;S)}\prod_{e\in F}c_e.
\]
The all-minors form of the matrix-tree theorem gives
\begin{equation}\label{eq:kappa-det}
        \kappa_G(S)=\det L[V\setminus S,V\setminus S],
\end{equation}
with the convention that the determinant of the empty matrix is $1$; see, for example, \cite[Chapter VI]{Tutte2001} or \cite{Bapat2010}.

\subsection{Kron reduction}\label{subsec:kron}

Fix a nonempty $S\subseteq V$ and set $I=V\setminus S$.  If $I\neq\varnothing$, the principal block $L_{II}$ is positive definite.  The Kron-reduced Laplacian on $S$ is the Schur complement
\begin{equation}\label{eq:kron-def}
        L^S=L_{SS}-L_{SI}L_{II}^{-1}L_{IS}.
\end{equation}
If $I=\varnothing$, we set $L^S=L$.  The matrix $L^S$ is again a Laplacian matrix on the vertex set $S$: it is symmetric, has zero row sums, has nonpositive off-diagonal entries, and has rank $|S|-1$.  Kron reduction is also called boundary reduction or Schur-complement reduction in electrical network theory.

\begin{lemma}\label{lem:kron-preserve}
Let $K=L^S$.  For every $x\in\R^S$ with $\one^{\mathsf T}x=0$,
\begin{equation}\label{eq:kron-energy}
        x^{\mathsf T}K^+x=\widetilde x^{\mathsf T}L^+\widetilde x,
\end{equation}
where $\widetilde x$ is the extension of $x$ by zero to $V$.  Consequently, the effective resistance matrix of the Kron-reduced network $K$ is exactly $R[S]$.
\end{lemma}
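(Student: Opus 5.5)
The plan is to prove \eqref{eq:kron-energy} variationally. The idea is that $x^{\mathsf T}K^+x$ and $\widetilde x^{\mathsf T}L^+\widetilde x$ are the minimum energies needed to route the same current injection $x$, once through the reduced network and once through the full network. The case $I=\varnothing$ is trivial, since then $K=L$ and $\widetilde x=x$, so assume $I\neq\varnothing$.

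Fix $x\in\R^S$ with $\one^{\mathsf T}x=0$. Because $L$ has kernel $\R\one$ and $\widetilde x\in\one^\perp$, the vector $\phi=L^+\widetilde x$ solves $L\phi=\widetilde x$. Write $\phi=(\phi_S,\phi_I)$ in block form. The $I$-row of $L\phi=\widetilde x$ reads $L_{IS}\phi_S+L_{II}\phi_I=0$, hence $\phi_I=-L_{II}^{-1}L_{IS}\phi_S$. Substituting this into the $S$-row gives
\[
        K\phi_S=L_{SS}\phi_S+L_{SI}\phi_I=x,
\]
so $\phi_S$ is a potential for $x$ on the reduced network.

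Next I compare the two quadratic forms. On one side, $\widetilde x^{\mathsf T}L^+\widetilde x=\widetilde x^{\mathsf T}\phi=x^{\mathsf T}\phi_S$. On the other side, $K$ is a Laplacian on $S$ of rank $|S|-1$, as stated in \cref{subsec:kron}, so its kernel is exactly $\R\one$. Since $x\in\one^\perp=\operatorname{range}K$, the vector $K^+x$ also solves $K\psi=x$. Therefore $\phi_S=K^+x+c\one$ for some scalar $c$. Then $x^{\mathsf T}\phi_S=x^{\mathsf T}K^+x$ because $x^{\mathsf T}\one=0$, which proves \eqref{eq:kron-energy}. The one point needing care is that the kernel of $K$ is exactly $\R\one$; this is the rank assertion already quoted, and it follows because $\det L_{II}>0$ and the Schur complement preserves nullity.

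For the consequence, I apply \eqref{eq:kron-energy} with $x=e_i-e_j$ for $i,j\in S$. Its extension by zero is $\widetilde x=e_i-e_j$ in $\R^V$. The left side is then the effective resistance between $i$ and $j$ in the network with Laplacian $K$. The right side is $R_{ij}$. Since both resistance matrices have zero diagonal, the effective resistance matrix of $K$ equals $R[S]$. I do not expect any genuine obstacle here; the only subtle point is the pseudoinverse and kernel bookkeeping handled in the previous paragraph.
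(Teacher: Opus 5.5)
Your proof is correct and is essentially the paper's argument. Both proofs use the $I$-rows of $L\phi=\widetilde x$ to identify a full-network potential's boundary part with a solution of $K\psi=x$, then use $\ker K=\R\one$ and $\one^{\mathsf T}x=0$ to discard the additive constant; the only difference is direction (you restrict $L^+\widetilde x$ to $S$, while the paper extends a reduced potential via $v_I=-L_{II}^{-1}L_{IS}v_S$), and your announced ``variational'' framing is never actually used.
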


\begin{proof}
If $I=\varnothing$, then $K=L$ and there is nothing to prove.  Assume $I\neq\varnothing$.  We give the standard Schur-complement argument.  Let $x\in\R^S$ with $\one^{\mathsf T}x=0$.  Choose a boundary potential $v_S$ satisfying $Kv_S=x$, which is unique up to addition of a constant.  Define the interior potential by
\[
        v_I=-L_{II}^{-1}L_{IS}v_S.
\]
Then the vector $v=(v_S,v_I)$ satisfies $Lv=\widetilde x$.  Since $\widetilde x\perp\one$, the vector $L^+\widetilde x$ is another solution of $Ly=\widetilde x$ up to addition of a constant vector.  Similarly, $v_S$ and $K^+x$ solve $Ky=x$ and differ by a constant vector.  As both $x$ and $\widetilde x$ have zero coordinate sum, these constants do not affect the energy pairing, and hence
\[
        \widetilde x^{\mathsf T}L^+\widetilde x
        =\widetilde x^{\mathsf T}v
        =x^{\mathsf T}v_S
        =x^{\mathsf T}K^+x.
\]
Taking $x=e_a-e_b$ for $a,b\in S$ proves preservation of effective resistances.  This is the usual electrical interpretation of Kron reduction; see also \cite{DorflerBullo2013,Devriendt2022LAA}.
\end{proof}

\begin{lemma}\label{lem:kron-tau}
Let $K=L^S$ and let $\tau(K)$ denote any diagonal cofactor of $K$, with the convention $\tau(K)=1$ if $|S|=1$.  Then
\begin{equation}\label{eq:kron-tau}
        \tau(K)=\frac{\tau(G)}{\kappa_G(S)}.
\end{equation}
\end{lemma}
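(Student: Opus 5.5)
The plan is to compute a diagonal cofactor of $K$ through the Schur complement determinant formula, and then to identify the resulting determinants with $\tau(G)$ and $\kappa_G(S)$ via the matrix-tree theorem and \eqref{eq:kappa-det}.

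The trivial cases come first.
\begin{itemize}
\item If $I=V\setminus S=\varnothing$, then $K=L$ and $\kappa_G(V)=1$, because the only $V$-rooted spanning forest is the edgeless one. The identity is then just the definition of $\tau(G)$.
\item If $|S|=1$, say $S=\{s\}$, then $\kappa_G(S)=\det L[V\setminus\{s\}]=\tau(G)$ by the matrix-tree theorem. Both sides equal $1$.
\end{itemize}

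For the general case, assume $|S|\ge2$ and $I\neq\varnothing$. Fix $s\in S$ and put $S'=S\setminus\{s\}$. The diagonal cofactor of $K$ at $s$ is $\det K[S']$. The key observation is that deleting row and column $s$ commutes with Kron reduction:
\[
K[S']=L_{S'S'}-L_{S'I}L_{II}^{-1}L_{IS'}.
\]
This is exactly the Schur complement of $L_{II}$ in the principal submatrix $M=L[V\setminus\{s\}]$, whose block decomposition uses $S'$ and $I$.

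By the Schur determinant formula, valid since $L_{II}$ is invertible,
\[
\det M=\det L_{II}\cdot\det K[S'].
\]
Now $\det M=\tau(G)$ by Kirchhoff's theorem, since $M$ is a diagonal cofactor of $L$. Also $\det L_{II}=\kappa_G(S)$ by \eqref{eq:kappa-det}. Hence
\[
\det K[S']=\frac{\tau(G)}{\kappa_G(S)}.
\]
Because $K$ is a Laplacian with zero row and column sums, all its diagonal cofactors coincide, so the value does not depend on the choice of $s$.

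There is no real obstacle here. The only point needing care is checking that the Schur-complement block of $M$ is literally the submatrix $K[S']$, i.e.\ that eliminating $I$ and deleting $s$ can be done in either order. This holds because entries of a Schur complement indexed by $S'$ involve only the rows and columns in $S'\cup I$.
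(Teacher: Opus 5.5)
Your proof is correct and follows the paper's argument essentially step for step. You treat the same two trivial cases, then delete a row and column $s\in S$ from $L$, take the Schur complement of $L_{II}$ to get $\tau(G)=\det L_{II}\,\det K[S\setminus\{s\}]$, and identify $\det L_{II}=\kappa_G(S)$ via \eqref{eq:kappa-det}.
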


\begin{proof}
If $|S|=1$, then \eqref{eq:kappa-det} gives $\kappa_G(S)=\tau(G)$, so the statement follows from the convention $\tau(K)=1$.  If $S=V$, then $K=L$ and $\kappa_G(V)=1$, so the assertion is also immediate.

Assume now that $|S|\ge2$ and $I\neq\varnothing$.  Choose $r\in S$.  Delete the row and column indexed by $r$ from $L$ and order the remaining indices as $(S\setminus\{r\})\sqcup I$.  The determinant of this matrix is $\tau(G)$.  Taking the Schur complement of the positive definite block $L_{II}$ gives
\[
        \tau(G)=\det(L_{II})\det K[S\setminus\{r\},S\setminus\{r\}].
\]
By \eqref{eq:kappa-det}, $\det(L_{II})=\kappa_G(S)$; by the matrix-tree theorem applied to the reduced Laplacian $K$, the second determinant is $\tau(K)$.  This proves \eqref{eq:kron-tau}.
\end{proof}

\section{Principal minors of resistance matrices}\label{sec:principal}

We begin with a general linear-algebra identity for cofactor sums.

\begin{lemma}\label{lem:cof-restriction}
Let $A$ be a real symmetric $k\times k$ matrix and let $H$ be any $k\times(k-1)$ matrix whose columns form an orthonormal basis of $\one^\perp$.  Then
\begin{equation}\label{eq:cof-restriction}
        \cof A=k\det(H^{\mathsf T}AH).
\end{equation}
\end{lemma}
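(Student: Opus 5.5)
We will use an orthogonal change of basis that separates the direction $\one$ from its orthogonal complement, and then read off the cofactor sum as a bordered determinant. Put $w=\one/\sqrt{k}$ and let $P=[\,w\;H\,]$. By hypothesis $P$ is a $k\times k$ orthogonal matrix, so $\det P=\pm1$ and $(\det P)^2=1$.

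The first step is to express $\cof A$ as a bordered determinant valid for every $A$, invertible or not. Expanding along the last row and then along the last column gives
\[
        \det\begin{pmatrix} A & \one\\ \one^{\mathsf T} & 0\end{pmatrix}
        =-\sum_{i,j}(-1)^{i+j}\det A(i|j)=-\cof A .
\]
For invertible $A$ this also follows from \eqref{eq:cof-inv} and the Schur complement formula. Since both sides are polynomial in the entries of $A$, the identity holds in general.

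Next we conjugate the bordered matrix by the orthogonal block matrix $\diag(P,1)$, which leaves its determinant unchanged. Because $P^{\mathsf T}\one=(\sqrt{k},0,\dots,0)^{\mathsf T}$, the bordered matrix becomes
\[
        \begin{pmatrix}
        w^{\mathsf T}Aw & w^{\mathsf T}AH & \sqrt{k}\\
        H^{\mathsf T}Aw & H^{\mathsf T}AH & 0\\
        \sqrt{k} & 0 & 0
        \end{pmatrix}.
\]
The last row has only one nonzero entry, $\sqrt{k}$, in the first column, and the last column has only $\sqrt{k}$ in the first row. Expanding along the last row and then along the last column therefore leaves the minor $H^{\mathsf T}AH$ multiplied by $(\sqrt{k})^2=k$. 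Tracking the sign from these two expansions gives
\[
        \det = -k\det(H^{\mathsf T}AH).
\]
Comparing this with the first step yields \eqref{eq:cof-restriction}.

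The only delicate point is the sign bookkeeping in the two cofactor expansions. The sign here must match the sign of the original bordered determinant. Both are the same kind of expansion, a $(1,k{+}1)$ entry followed by its transpose position, which produces $-(\text{minor})$. So the signs cancel consistently, and checking this explicitly for $k=1$ or $k=2$ is enough to confirm it. When $k=1$, $H$ is empty and both sides equal $1$ by the stated convention.
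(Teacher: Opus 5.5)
Your proof is correct, but its key device differs from the paper's. The paper first assumes $A$ is invertible and writes $\cof A=(\det A)\,\one^{\mathsf T}A^{-1}\one$ via \eqref{eq:cof-inv}. It then conjugates by the same orthogonal matrix $U=[k^{-1/2}\one\;H]$ that you call $P$. Next it uses the fact that $\det M\cdot(M^{-1})_{11}$ is the complementary principal minor of $M=U^{\mathsf T}AU$, namely $\det(H^{\mathsf T}AH)$. Finally it removes the invertibility hypothesis by density and continuity.

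You instead encode the cofactor sum as the bordered determinant $\det\begin{pmatrix}A&\one\\ \one^{\mathsf T}&0\end{pmatrix}=-\cof A$. For invertible $A$ this is just the Schur-complement form of \eqref{eq:cof-inv}, but your double Laplace expansion proves it directly for every square $A$. The orthogonal change of basis and a second double expansion then finish the job.

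What your route buys is that it is purely algebraic. It needs no invertibility assumption and no limiting argument, so your sentence about polynomial identities is superfluous. It also never uses the symmetry of $A$. The paper's route is shorter once \eqref{eq:cof-inv} is available, and it fits the invertible setting in which the lemma is later applied.

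One presentational fix: do not justify the sign by checking $k=1$ or $k=2$. Simply record that the two expansions in the conjugated matrix contribute $(-1)^{(k+1)+1}(-1)^{1+k}=-1$ for every $k$. This is exactly parallel to $(-1)^{(k+1)+j}(-1)^{i+k}=-(-1)^{i+j}$ in your first step.
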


\begin{proof}
First assume $A$ is invertible.  Let $U=[k^{-1/2}\one\; H]$, an orthogonal matrix, and write
\[
        U^{\mathsf T}AU=\begin{pmatrix} \alpha & \beta^{\mathsf T}\\ \beta & C\end{pmatrix},
        \qquad C=H^{\mathsf T}AH.
\]
Using \eqref{eq:cof-inv},
\[
        \cof A=(\det A)\one^{\mathsf T}A^{-1}\one
        =k\det(U^{\mathsf T}AU)\bigl((U^{\mathsf T}AU)^{-1}\bigr)_{11}.
\]
For an invertible block matrix, the product of the determinant and the $(1,1)$ entry of the inverse is the complementary principal minor, namely $\det C$.  Thus $\cof A=k\det C$.  Both sides of \eqref{eq:cof-restriction} are polynomial functions of the entries of $A$, and the invertible matrices are dense among all real symmetric matrices.  The identity therefore extends to singular $A$ by continuity.
\end{proof}

The following full-matrix cofactor identity is well known; for example, it follows as a special case of the cofactor-sum identity of Bapat and Sivasubramanian and of the weighted extension by Ali, Atik, and Bapat \cite{BapatSivasubramanian2011,AliAtikBapat2020}.  We include the proof to make the normalization explicit.

\begin{proposition}\label{prop:full-cof}
Let $M$ be the Laplacian of a connected weighted graph on $k$ vertices and let $\Omega$ be its effective resistance matrix.  Then
\begin{equation}\label{eq:full-cof}
        \cof \Omega=\frac{(-2)^{k-1}}{\tau(M)},
\end{equation}
where $\tau(M)$ is any diagonal cofactor of $M$, with $\tau(M)=1$ for $k=1$.
\end{proposition}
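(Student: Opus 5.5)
The plan is to apply \cref{lem:cof-restriction} to $A=\Omega$ and compute the restricted quadratic form explicitly. For $k=1$ there is nothing to prove, since $\cof\Omega=1$ by convention and $\tau(M)=1$. For $k\ge2$, let $z=\diag(M^+)$, so that $\Omega=z\one^{\mathsf T}+\one z^{\mathsf T}-2M^+$ by \eqref{eq:R-from-Q} applied to $M$. Let $H$ be a $k\times(k-1)$ matrix whose columns form an orthonormal basis of $\one^\perp$. Then $H^{\mathsf T}\one=0$ kills both rank-one terms, and we obtain
\[
H^{\mathsf T}\Omega H=-2H^{\mathsf T}M^+H .
\]
\Cref{lem:cof-restriction} therefore gives
\[
\cof\Omega=k(-2)^{k-1}\det(H^{\mathsf T}M^+H).
\]

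Next I compute $\det(H^{\mathsf T}M^+H)$. Since $\ker M=\R\one$, the matrix $M^+$ maps $\one^\perp$ to itself, and there it is the inverse of $M$ restricted to $\one^\perp$. Hence $H^{\mathsf T}M^+H=(H^{\mathsf T}MH)^{-1}$, because $HH^{\mathsf T}$ is the orthogonal projector onto $\one^\perp$, which commutes with both $M$ and $M^+$. It follows that $\det(H^{\mathsf T}M^+H)=1/\det(H^{\mathsf T}MH)$. Now $\det(H^{\mathsf T}MH)$ is the product of the nonzero eigenvalues of $M$. By the matrix-tree theorem in its eigenvalue form, this product equals $k\,\tau(M)$. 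One can also see this from \cref{lem:cof-restriction} itself applied to $A=M$: since $M$ has zero row sums, all its cofactors are equal, so $\cof M=k^2\tau(M)$, and therefore $\det(H^{\mathsf T}MH)=\cof M/k=k\,\tau(M)$.

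Combining these gives
\[
\cof\Omega=k(-2)^{k-1}\cdot\frac{1}{k\,\tau(M)}=\frac{(-2)^{k-1}}{\tau(M)} .
\]
The only step requiring care is the identity $\cof M=k^2\tau(M)$. To justify it I will use that $M\one=0$ and that $M$ is symmetric, so the adjugate of $M$ is a multiple of $\one\one^{\mathsf T}$. Every cofactor $(-1)^{i+j}\det M(i|j)$ then equals the same number, which is the diagonal cofactor $\tau(M)$. Summing the $k^2$ equal cofactors gives $\cof M=k^2\tau(M)$. Everything else is direct substitution.
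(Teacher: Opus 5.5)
Your proposal is correct and follows the paper's proof essentially step for step. You kill the rank-one terms with $H$, apply Lemma~\ref{lem:cof-restriction}, invert the restriction of $M$ to $\one^\perp$, and identify $\det(H^{\mathsf T}MH)=k\,\tau(M)$. Your extra derivation of that last identity is a valid, self-contained substitute for the paper's appeal to the eigenvalue form of Kirchhoff's theorem: you apply Lemma~\ref{lem:cof-restriction} to $M$ itself, using that $\operatorname{adj}(M)$ is a multiple of $\one\one^{\mathsf T}$.
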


\begin{proof}
The case $k=1$ is immediate: $\Omega=[0]$ and $\cof\Omega=1$.

Assume $k\ge2$.  Let $H$ be an orthonormal basis matrix for $\one^\perp$.  If $Q=M^+$, then for vectors in $\one^\perp$ the rank-one terms in the resistance representation vanish, so
\[
        H^{\mathsf T}\Omega H=-2H^{\mathsf T}QH.
\]
The restriction of $Q$ to $\one^\perp$ is the inverse of the restriction of $M$ to $\one^\perp$.  Therefore
\[
        \det(H^{\mathsf T}QH)=\frac{1}{\det(H^{\mathsf T}MH)}.
\]
The determinant $\det(H^{\mathsf T}MH)$ is the product of the nonzero eigenvalues of $M$, hence equals $k\tau(M)$ by Kirchhoff's theorem.  Applying Lemma~\ref{lem:cof-restriction} gives
\[
        \cof\Omega=k(-2)^{k-1}\det(H^{\mathsf T}QH)
        =\frac{k(-2)^{k-1}}{k\tau(M)},
\]
which is \eqref{eq:full-cof}.
\end{proof}

The next formula is likewise a principal-submatrix specialization of known resistance-matrix minor identities \cite{BapatSivasubramanian2011,AliAtikBapat2020}.  The proof emphasizes the boundary reduction that will also be used for the normalized factor.

\begin{theorem}\label{thm:cof-principal}
For every nonempty $S\subseteq V$,
\begin{equation}\label{eq:cof-principal}
        \cof R[S]=(-2)^{|S|-1}\frac{\kappa_G(S)}{\tau(G)}.
\end{equation}
\end{theorem}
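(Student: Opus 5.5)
The plan is to combine three earlier results: Lemma~\ref{lem:kron-preserve}, Proposition~\ref{prop:full-cof}, and Lemma~\ref{lem:kron-tau}. The key point is that $R[S]$ is exactly the effective-resistance matrix of the Kron-reduced network. So the cofactor sum of $R[S]$ is given by the full-matrix formula applied to $K=L^S$. The interior vertices then enter only through the ratio $\tau(K)=\tau(G)/\kappa_G(S)$.

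First I dispose of the case $|S|=1$ directly. Here $R[S]=[0]$, so by the convention after \eqref{eq:cof-inv} we have $\cof R[S]=1$. For $S=\{r\}$, \eqref{eq:kappa-det} gives $\kappa_G(S)=\det L[V\setminus\{r\}]=\tau(G)$ by the matrix-tree theorem. Hence the right-hand side of \eqref{eq:cof-principal} equals $(-2)^0\cdot 1=1$, as required.

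For $|S|=k\ge2$, set $K=L^S$. As noted in \S\ref{subsec:kron}, $K$ is the Laplacian of a weighted graph on $S$ with rank $k-1$, so that graph is connected. Lemma~\ref{lem:kron-preserve} shows that the effective-resistance matrix $\Omega$ of $K$ equals $R[S]$. Proposition~\ref{prop:full-cof} with $M=K$ then gives $\cof R[S]=(-2)^{k-1}/\tau(K)$. Substituting $\tau(K)=\tau(G)/\kappa_G(S)$ from Lemma~\ref{lem:kron-tau} yields \eqref{eq:cof-principal}. The case $S=V$ is covered by the convention $L^S=L$ together with $\kappa_G(V)=1$.

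The only point needing care is applying Proposition~\ref{prop:full-cof} to $K$. Its proof uses only that $K$ is symmetric positive semidefinite with kernel $\R\one$, and that $\det(H^{\mathsf T}KH)=k\,\tau(K)$. The second fact holds for any symmetric matrix with zero row sums: the sum of the principal $(k-1)$-minors equals the product of the nonzero eigenvalues, and all diagonal cofactors coincide. So the argument goes through even if one prefers not to invoke the "$K$ is a graph Laplacian" statement. This check is the main step I would verify explicitly.
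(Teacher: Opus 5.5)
Your proposal is correct and follows the paper's proof: you identify $R[S]$ as the effective-resistance matrix of $K=L^S$ via Lemma~\ref{lem:kron-preserve}, apply Proposition~\ref{prop:full-cof} to $K$, and substitute $\tau(K)=\tau(G)/\kappa_G(S)$ from Lemma~\ref{lem:kron-tau}. Your separate treatment of $|S|=1$ and $S=V$, and your remark that Proposition~\ref{prop:full-cof} only needs $K$ to be symmetric positive semidefinite with kernel $\R\one$, are valid extra checks that the paper absorbs into the conventions of those lemmas.
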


\begin{proof}
Let $K=L^S$ be the Kron-reduced Laplacian on $S$.  By Lemma~\ref{lem:kron-preserve}, the matrix $R[S]$ is the full effective resistance matrix of the network with Laplacian $K$.  Applying Proposition~\ref{prop:full-cof} to $K$ and then using Lemma~\ref{lem:kron-tau}, we obtain
\[
        \cof R[S]=\frac{(-2)^{|S|-1}}{\tau(K)}
        =(-2)^{|S|-1}\frac{\kappa_G(S)}{\tau(G)}.
\]
\end{proof}

The determinant itself is obtained by multiplying \eqref{eq:cof-principal} by the normalized factor $\rho_G(S)$.  We first record the elementary optimization identity that identifies this factor.

\begin{lemma}\label{lem:normalized-optimization}
Let $A$ be a real symmetric $k\times k$ matrix.  If $k=1$, assume $A=[0]$ and set $\rho(A)=0$.  If $k\ge2$, assume that $A$ is invertible and that the quadratic form $u\mapsto u^{\mathsf T}Au$ is negative definite on $\one^\perp$, and set
\[
        \rho(A)=\frac{\det A}{\cof A}.
\]
Then
\[
        \rho(A)=\max\{u^{\mathsf T}Au:\;u\in\R^k,\;\one^{\mathsf T}u=1\}.
\]
For $k\ge2$, the maximizer is unique and equals $\rho(A)A^{-1}\one$.
\end{lemma}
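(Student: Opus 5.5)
The case $k=1$ is immediate. The only admissible vector is $u=1$, which gives $u^{\mathsf T}Au=0=\rho(A)$. For $k\ge2$ the plan is a Lagrange-multiplier computation, with concavity supplying global optimality and uniqueness.

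\emph{Step 1: strict concavity on the constraint set.} The affine set $\{u:\one^{\mathsf T}u=1\}$ is $u_0+\one^\perp$ for any $u_0$ on it. Writing $u=u_0+w$ with $w\in\one^\perp$, the objective is a quadratic in $w$ whose Hessian is $2A$ restricted to $\one^\perp$. By hypothesis this Hessian is negative definite. So the objective is strictly concave on the affine set and tends to $-\infty$ as $|w|\to\infty$. Hence a maximizer exists, it is unique, and it is characterized by the stationarity condition: $Au=\lambda\one$ for some scalar $\lambda$, together with $\one^{\mathsf T}u=1$.

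\emph{Step 2: solve the stationarity system.} Since $A$ is invertible, $u=\lambda A^{-1}\one$. The constraint gives $\lambda\,\one^{\mathsf T}A^{-1}\one=1$. This requires $\one^{\mathsf T}A^{-1}\one\neq0$. By \eqref{eq:cof-inv} that quantity equals $\cof A/\det A$, and I must check it is nonzero. This is the one slightly delicate point. I would argue it directly: if $\one^{\mathsf T}A^{-1}\one=0$, then $w=A^{-1}\one$ lies in $\one^\perp$ and is nonzero. But then $w^{\mathsf T}Aw=\one^{\mathsf T}A^{-1}\one=0$, contradicting negative definiteness on $\one^\perp$. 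Therefore $\cof A\neq0$, so $\rho(A)$ is well defined, and
\[
\lambda=\frac{1}{\one^{\mathsf T}A^{-1}\one}=\frac{\det A}{\cof A}=\rho(A).
\]
This yields the maximizer $u=\rho(A)A^{-1}\one$.

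\emph{Step 3: evaluate the maximum.} At the maximizer, $u^{\mathsf T}Au=u^{\mathsf T}(\lambda\one)=\lambda\,\one^{\mathsf T}u=\lambda=\rho(A)$. This completes the proof.

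The main obstacle is only the nondegeneracy $\one^{\mathsf T}A^{-1}\one\ne0$, which is handled by the argument in Step 2. Existence of the maximum also deserves a word, and it follows from the coercivity noted in Step 1.
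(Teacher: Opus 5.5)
Your proof is correct and follows the paper's argument: strict concavity on the hyperplane $\one^{\mathsf T}u=1$, then the Lagrange condition $Au=\lambda\one$, then $\lambda=1/(\one^{\mathsf T}A^{-1}\one)=\det A/\cof A$ via \eqref{eq:cof-inv}. The only difference is that you prove $\one^{\mathsf T}A^{-1}\one\neq0$ directly by testing negative definiteness on $w=A^{-1}\one$, whereas the paper obtains $\cof A=k\det(H^{\mathsf T}AH)\neq0$ from Lemma~\ref{lem:cof-restriction}; your version is slightly more self-contained.
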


\begin{proof}
The case $k=1$ is immediate.  Assume $k\ge2$.  If $H$ is an orthonormal basis matrix for $\one^\perp$, then $H^{\mathsf T}AH$ is negative definite; hence Lemma~\ref{lem:cof-restriction} gives
\[
        \cof A=k\det(H^{\mathsf T}AH)\neq0.
\]
The objective function is strictly concave on the affine hyperplane $\one^{\mathsf T}u=1$, because its Hessian restricts to a negative definite form on $\one^\perp$.  Hence any critical point is the unique global maximum.

The Lagrange multiplier equation is $Au=\lambda\one$.  Since $A$ is invertible,
\[
        u=\lambda A^{-1}\one.
\]
The constraint gives $1=\lambda\one^{\mathsf T}A^{-1}\one$.  By \eqref{eq:cof-inv},
\[
        \lambda=\frac{1}{\one^{\mathsf T}A^{-1}\one}
        =\frac{\det A}{\cof A}=\rho(A).
\]
At the critical point, $u^{\mathsf T}Au=\lambda\one^{\mathsf T}u=\lambda$, proving the claim.
\end{proof}

We now compute $\rho_G(S)$ in terms of the Kron reduction.

\begin{proposition}\label{prop:rho-kron}
Let $S\subseteq V$ be nonempty, put $k=|S|$, and let $K=L^S$.  Let $Q=K^+$ and $q=\diag(Q)$.  Then
\begin{equation}\label{eq:rho-kron}
        \rho_G(S):=\frac{\det R[S]}{\cof R[S]}
        =\frac{2}{k}\tr Q+\frac12 q^{\mathsf T}Kq.
\end{equation}
For $k=1$, both sides are $0$.
\end{proposition}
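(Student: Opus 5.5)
By Lemma~\ref{lem:kron-preserve}, $R[S]$ is the full effective resistance matrix of the reduced network with Laplacian $K$. Writing $Q=K^+$ and $q=\diag(Q)$, identity \eqref{eq:R-from-Q} applied to $K$ gives
\[
        A:=R[S]=q\one^{\mathsf T}+\one q^{\mathsf T}-2Q .
\]
The case $k=1$ is trivial: $A=[0]$, and $\tr Q=0$ and $q^{\mathsf T}Kq=0$ because $K=[0]$. For $k\ge2$, Lemma~\ref{lem:negative-type} shows that $A$ satisfies the hypotheses of Lemma~\ref{lem:normalized-optimization}. Hence
\[
        \rho_G(S)=\max\{u^{\mathsf T}Au:\ \one^{\mathsf T}u=1\},
\]
and the maximizer is characterized by $Au=\lambda\one$ with $\one^{\mathsf T}u=1$, where $\lambda=\rho_G(S)$. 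The plan is to solve this stationarity system explicitly.

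Decompose $u=\frac1k\one+w$ with $w\in\one^\perp$. Since $Q\one=0$, we get $Au=q+\one(q^{\mathsf T}u)-2Qw$. The stationarity condition $Au=\lambda\one$ therefore becomes
\[
        2Qw=q+(q^{\mathsf T}u-\lambda)\one .
\]
Apply $K$ to this equation. Because $KQ=QK=I-\frac1k\one\one^{\mathsf T}$ (the orthogonal projection onto $\one^\perp$) and $K\one=0$, this yields $2w=Kq$, so $w=\frac12 Kq$. This $w$ lies in $\one^\perp$, as required, since $\one^{\mathsf T}K=0$. By uniqueness of the critical point we need not worry further about consistency; one can still check that $2Qw=QKq=q-\frac1k(\one^{\mathsf T}q)\one$ matches the right-hand side, which determines $\lambda$.

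Now compute $\lambda$ directly from the stationarity equation:
\[
        \lambda=q^{\mathsf T}u+\tfrac1k\,\one^{\mathsf T}q .
\]
With $u=\frac1k\one+\frac12Kq$ this gives
\[
        q^{\mathsf T}u=\tfrac1k\tr Q+\tfrac12 q^{\mathsf T}Kq ,
\]
using $\one^{\mathsf T}q=\tr Q$. Therefore
\[
        \lambda=\tfrac2k\tr Q+\tfrac12 q^{\mathsf T}Kq ,
\]
which is \eqref{eq:rho-kron}. As a cross-check, one can instead evaluate $u^{\mathsf T}Au=2q^{\mathsf T}u-2w^{\mathsf T}Qw$. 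Since $w^{\mathsf T}Qw=\frac14 q^{\mathsf T}KQKq=\frac14 q^{\mathsf T}Kq$, this gives the same value.

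The only delicate point is the pseudoinverse algebra: that $KQ$ is the projection $I-\frac1k\one\one^{\mathsf T}$ and that $Q\one=0$. Both follow from the kernel of $K$ being $\R\one$, since the Kron-reduced Laplacian is connected of rank $k-1$. The sign bookkeeping in deriving $w$ and the constant $\lambda$ from the stationarity equation is the main place where errors could occur.
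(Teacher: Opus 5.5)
Your proof is correct and follows essentially the same route as the paper. Both arguments work on the Kron-reduced network with $R[S]=q\one^{\mathsf T}+\one q^{\mathsf T}-2Q$, split $u=\frac1k\one+w$ with $w\in\one^\perp$, use $KQ=I-\frac1k\one\one^{\mathsf T}$ to obtain the optimizer $w=\frac12Kq$, and identify the maximum with $\rho_G(S)$ via Lemmas~\ref{lem:negative-type} and~\ref{lem:normalized-optimization}.

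The only difference is cosmetic. You read off the value as the Lagrange multiplier $\lambda$, by summing the coordinates of $Au=\lambda\one$. The paper instead substitutes $z_*=\frac12Kq$ into the reduced objective, which is exactly the cross-check you give at the end.
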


\begin{proof}
The case $k=1$ is immediate.  Assume $k\ge2$.  Since $R[S]$ is the effective resistance matrix of the Kron-reduced network, we may work entirely on the vertex set $S$.  Thus
\[
        R[S]=q\one^{\mathsf T}+\one q^{\mathsf T}-2Q.
\]
For any $u\in\R^S$ with $\one^{\mathsf T}u=1$, write
\[
        u=\frac1k\one+z,
        \qquad z\in\one^\perp.
\]
Then
\begin{equation}\label{eq:objective-z}
        u^{\mathsf T}R[S]u
        =\frac{2}{k}\one^{\mathsf T}q+2q^{\mathsf T}z-2z^{\mathsf T}Qz.
\end{equation}
The quadratic form is strictly concave in $z$ because $Q$ is positive definite on $\one^\perp$.  The unique critical point satisfies
\[
        Qz=\frac12\left(q-\frac{\one^{\mathsf T}q}{k}\one\right).
\]
Since $QK=KQ$ is the orthogonal projection onto $\one^\perp$, the solution is
\begin{equation}\label{eq:z-star}
        z_* =\frac12Kq.
\end{equation}
Substituting \eqref{eq:z-star} into \eqref{eq:objective-z} gives
\[
        \max_{\one^{\mathsf T}u=1}u^{\mathsf T}R[S]u
        =\frac{2}{k}\tr Q+\frac12 q^{\mathsf T}Kq.
\]
By Lemmas~\ref{lem:negative-type} and \ref{lem:normalized-optimization}, this maximum is $\det R[S]/\cof R[S]$.  This proves the formula.  The same computation also shows that the maximizing vector is $k^{-1}\one+\frac12Kq$.
\end{proof}

The vector $k^{-1}\one+\frac12Kq$ is the boundary-network version of the resistance-curvature vector used in resistance geometry \cite{Devriendt2022Thesis,DevriendtLambiotte2022}.

\begin{corollary}\label{cor:rho-average}
Let $S\subseteq V$ be nonempty with $k=|S|\ge2$, and let $K=L^S$, $Q=K^+$, and $q=\diag(Q)$.  Then
\begin{equation}\label{eq:rho-average}
        \rho_G(S)=\frac{1}{k^2}\sum_{a,b\in S}R_{ab}+\frac12 q^{\mathsf T}Kq.
\end{equation}
In particular,
\begin{equation}\label{eq:rho-lower-average}
        \rho_G(S)\ge \frac{1}{k^2}\sum_{a,b\in S}R_{ab},
\end{equation}
with equality if and only if the diagonal entries of $Q$ are all equal.
\end{corollary}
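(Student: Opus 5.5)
The plan is to start from Proposition~\ref{prop:rho-kron} and rewrite its first term using the Kron-reduced network. By Lemma~\ref{lem:kron-preserve}, $R[S]$ is the full effective resistance matrix of the network with Laplacian $K$. So we may write
\[
        R[S]=q\one^{\mathsf T}+\one q^{\mathsf T}-2Q .
\]
Summing all entries gives
\[
        \one^{\mathsf T}R[S]\one=2k\,\one^{\mathsf T}q-2\,\one^{\mathsf T}Q\one .
\]
Since $Q=K^+$ and $K\one=0$, the vector $\one$ lies in the kernel of $Q$, so $\one^{\mathsf T}Q\one=0$. Hence
\[
        \sum_{a,b\in S}R_{ab}=2k\tr Q ,
\]
and dividing by $k^2$ turns $\frac{2}{k}\tr Q$ into the average $\frac1{k^2}\sum_{a,b}R_{ab}$. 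Substituting into \eqref{eq:rho-kron} then gives \eqref{eq:rho-average}.

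For the inequality \eqref{eq:rho-lower-average}, it suffices that $q^{\mathsf T}Kq\ge0$, which holds because $K$ is a Laplacian and hence positive semidefinite.

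For the equality case, I will use that the kernel of $K$ is exactly $\R\one$. This holds because $K$ has rank $k-1$, as recorded in \Cref{subsec:kron}. For a positive semidefinite $K$, $q^{\mathsf T}Kq=0$ holds if and only if $Kq=0$. This is equivalent to $q\in\R\one$, that is, all diagonal entries of $Q$ are equal. An alternative route is through the maximizer $k^{-1}\one+\frac12Kq$ from Proposition~\ref{prop:rho-kron}, but the direct positive-semidefiniteness argument is simpler.

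I do not expect a real obstacle. The only point needing care is checking $\one^{\mathsf T}Q\one=0$, which follows from the pseudoinverse having the same kernel as $K$ for a symmetric matrix.
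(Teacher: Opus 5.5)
Your proposal is correct and follows essentially the same route as the paper's proof. Like the paper, you write $R[S]=q\one^{\mathsf T}+\one q^{\mathsf T}-2Q$ on the Kron-reduced network and use $Q\one=0$ to get $\sum_{a,b}R_{ab}=2k\tr Q$, then substitute into Proposition~\ref{prop:rho-kron}. You handle the equality case the same way, via positive semidefiniteness of $K$ and $\ker K=\R\one$.
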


\begin{proof}
For the Kron-reduced network, $R[S]=q\one^{\mathsf T}+\one q^{\mathsf T}-2Q$ and $Q\one=0$.  Hence
\[
        \sum_{a,b\in S}R_{ab}=2k\tr Q.
\]
Substituting this identity in \eqref{eq:rho-kron} gives \eqref{eq:rho-average}.  Since $K$ is a connected Laplacian, $q^{\mathsf T}Kq\ge0$, with equality exactly when $q\in\ker K=\R\one$.
\end{proof}

Combining the known cofactor factor in Theorem~\ref{thm:cof-principal} with the Kron-reduced radius formula in Proposition~\ref{prop:rho-kron} gives the boundary factorization of the determinant.

\begin{theorem}\label{thm:det-principal}
Let $S\subseteq V$ be nonempty and set $k=|S|$.  Let $K=L^S$, $Q=K^+$, and $q=\diag(Q)$.  Then
\begin{equation}\label{eq:det-principal}
        \det R[S]
        =(-2)^{k-1}\frac{\kappa_G(S)}{\tau(G)}
        \left(\frac{2}{k}\tr Q+\frac12 q^{\mathsf T}Kq\right),
\end{equation}
with the convention that the parenthesized factor is zero for $k=1$.
\end{theorem}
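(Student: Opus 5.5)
The plan is to combine \cref{thm:cof-principal} with \cref{prop:rho-kron}, treating the cases $k=1$ and $k\ge2$ separately.

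For $k=1$ the matrix $R[S]=[0]$, so $\det R[S]=0$. The right-hand side of \eqref{eq:det-principal} is also zero, because the parenthesized factor is zero by convention. The cofactor factor $(-2)^0\kappa_G(S)/\tau(G)$ is finite, and in fact equals $1$ by \eqref{eq:kappa-det}. So the identity holds trivially.

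Now let $k\ge2$. The key point is that $\cof R[S]\neq0$, so that we may legitimately write $\det R[S]=\cof R[S]\cdot\rho_G(S)$. I would get this nonvanishing in either of two ways.
\begin{enumerate}
\item Directly from \cref{thm:cof-principal}: $\kappa_G(S)>0$ because every $S$-rooted spanning forest exists (the graph is connected and the conductances are positive), and $\tau(G)>0$. Hence $\cof R[S]=(-2)^{k-1}\kappa_G(S)/\tau(G)\neq0$.
\item Alternatively, from \cref{lem:negative-type} and \cref{lem:cof-restriction}: $H^{\mathsf T}R[S]H$ is negative definite, so $\cof R[S]=k\det(H^{\mathsf T}R[S]H)\neq0$.
\end{enumerate}

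With $\cof R[S]\neq0$, the definition of $\rho_G(S)$ in \cref{prop:rho-kron} gives $\det R[S]=\cof R[S]\,\rho_G(S)$. Substituting \eqref{eq:cof-principal} for the first factor and \eqref{eq:rho-kron} for the second yields \eqref{eq:det-principal}.

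There is no real obstacle here: all the substantive work is already done in \cref{thm:cof-principal} and \cref{prop:rho-kron}. The only points needing care are the nonvanishing of the cofactor sum, which justifies the division defining $\rho_G(S)$, and the consistency of the $k=1$ convention.
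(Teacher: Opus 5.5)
Your proposal is correct and follows the paper's proof: the paper handles $k=1$ via $R[S]=[0]$, and for $k\ge2$ it multiplies the cofactor formula of Theorem~\ref{thm:cof-principal} by the Kron-reduced expression for $\rho_G(S)$ from Proposition~\ref{prop:rho-kron}. Your explicit check that $\cof R[S]\neq0$ is a sound addition; the paper records the same fact separately at the start of Section~\ref{sec:radius}.
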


\begin{proof}
For $k=1$, $R[S]=[0]$ and \eqref{eq:det-principal} is clear.  For $k\ge2$, multiply the cofactor formula \eqref{eq:cof-principal} by \eqref{eq:rho-kron}.
\end{proof}

\begin{remark}\label{rem:trees}
Suppose $G$ is a tree with edge resistances $r_e=1/c_e$.  Then effective resistance is path distance.  Since $\tau(G)=\prod_{e\in E}c_e$, the factor $\kappa_G(S)/\tau(G)$ becomes
\[
        \sum_{F\in\FF(G;S)}\prod_{e\notin F}r_e,
\]
which is the coweight enumerator appearing in formulas for tree distance submatrices.  Thus Theorem~\ref{thm:cof-principal} recovers the cofactor-sum identity for tree distance principal submatrices.  The determinant formula above recovers the same determinant after inserting the corresponding normalized factor $\rho_G(S)$; the tree-specific work of \cite{RichmanShokriehWuPrincipal} gives an additional rooted-forest expression for that factor.
\end{remark}

\section{Normalized principal minors and local radii}\label{sec:radius}

In this section we study
\[
        \rho_G(S)=\frac{\det R[S]}{\cof R[S]}.
\]
By Lemma~\ref{lem:negative-type}, $R[S]$ is invertible for $|S|\ge2$, and Theorem~\ref{thm:cof-principal} shows that $\cof R[S]\neq0$ for every nonempty $S$.  Moreover, Proposition~\ref{prop:rho-kron} gives $\rho_G(S)>0$ whenever $|S|\ge2$, while $\rho_G(S)=0$ for $|S|=1$.

The optimization viewpoint below is closely related to the resistance-radius formulation in Devriendt's resistance geometry and to the normalized-principal-minor optimization used by Richman, Shokrieh, and Wu in the tree case \cite{Devriendt2022Thesis,RichmanShokriehWuPrincipal}.

\begin{proposition}\label{prop:optimization}
For every nonempty $S\subseteq V$,
\begin{equation}\label{eq:optimization}
        \rho_G(S)=\max\{u^{\mathsf T}R[S]u:\;u\in\R^S,\;\one^{\mathsf T}u=1\}.
\end{equation}
For $|S|\ge2$, the maximizer is unique and equals
\begin{equation}\label{eq:u-star-inverse}
        u_S=\rho_G(S)R[S]^{-1}\one.
\end{equation}
Equivalently, if $K=L^S$, $Q=K^+$, and $q=\diag(Q)$, then
\begin{equation}\label{eq:u-star-kron}
        u_S=\frac1{|S|}\one+\frac12 Kq.
\end{equation}
\end{proposition}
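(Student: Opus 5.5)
This is essentially a repackaging of results already proved in \cref{sec:principal}, so the plan is to assemble them rather than compute anything new. We split according to $|S|$.

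\emph{The case $|S|=1$.} Here $R[S]=[0]$, and the constraint $\one^{\mathsf T}u=1$ forces $u=1$. The maximum is therefore $0$. By the convention recorded before the proposition, $\rho_G(S)=\det[0]/\cof[0]=0/1=0$, which gives \eqref{eq:optimization}. No maximizer formula is claimed in this case.

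\emph{The case $|S|\ge2$.} First we check the hypotheses of Lemma~\ref{lem:normalized-optimization} with $A=R[S]$.
\begin{itemize}
\item By Lemma~\ref{lem:negative-type}, $R[S]$ has inertia $(1,k-1,0)$, so it is invertible.
\item By \eqref{eq:negative-type}, its quadratic form is negative definite on $\one^\perp$.
\end{itemize}
Lemma~\ref{lem:normalized-optimization} then gives three things at once: $\det R[S]/\cof R[S]$ equals the constrained maximum, the maximizer is unique, and the maximizer equals $\rho(A)A^{-1}\one=\rho_G(S)R[S]^{-1}\one$. This proves \eqref{eq:optimization} and \eqref{eq:u-star-inverse}.

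\emph{The Kron form \eqref{eq:u-star-kron}.} Here we use the proof of Proposition~\ref{prop:rho-kron}. By Lemma~\ref{lem:kron-preserve} we may write $R[S]=q\one^{\mathsf T}+\one q^{\mathsf T}-2Q$. Every $u$ with $\one^{\mathsf T}u=1$ decomposes uniquely as $u=k^{-1}\one+z$ with $z\in\one^\perp$. The objective \eqref{eq:objective-z} is strictly concave in $z$, and its unique critical point is $z_*=\tfrac12Kq$.

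The main point needing care is checking that $z_*$ really solves the critical equation and lies in $\one^\perp$. Since $K\one=0$ and $K$ is symmetric, $\one^{\mathsf T}Kq=0$, so $z_*\in\one^\perp$. Next, $QK=KQ=I-k^{-1}\one\one^{\mathsf T}$ is the orthogonal projection onto $\one^\perp$. Hence
\[
Qz_*=\tfrac12\bigl(q-k^{-1}(\one^{\mathsf T}q)\one\bigr),
\]
which is exactly the critical-point equation. Thus $u=k^{-1}\one+\tfrac12Kq$ maximizes the objective over the hyperplane.

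By the uniqueness already obtained from Lemma~\ref{lem:normalized-optimization}, this vector coincides with $u_S=\rho_G(S)R[S]^{-1}\one$. This establishes the equivalence of \eqref{eq:u-star-inverse} and \eqref{eq:u-star-kron}.

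As an optional sanity check, one can verify directly that $R[S]u_S$ is a constant multiple of $\one$. Using $Q\one=0$ and $QKq=q-k^{-1}(\one^{\mathsf T}q)\one$, this reduces to the computation above, so it adds nothing essential.
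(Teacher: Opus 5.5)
Your proposal is correct and follows the paper's proof: the case $|S|=1$ is trivial, Lemma~\ref{lem:negative-type} supplies the hypotheses of Lemma~\ref{lem:normalized-optimization} for $A=R[S]$, and the Kron form is read off from the critical point computed in the proof of Proposition~\ref{prop:rho-kron}. Your extra check that $z_*=\tfrac12Kq$ lies in $\one^\perp$ and solves the critical-point equation makes explicit a step the paper leaves implicit.
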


\begin{proof}
If $|S|=1$, then $R[S]=[0]$, $\rho_G(S)=0$, and the statement is immediate.  If $|S|\ge2$, Lemma~\ref{lem:negative-type} shows that $R[S]$ is invertible and negative definite on $\one^\perp$.  Applying Lemma~\ref{lem:normalized-optimization} to $A=R[S]$ gives \eqref{eq:optimization} and \eqref{eq:u-star-inverse}.  Formula \eqref{eq:u-star-kron} is the maximizing vector computed in the proof of Proposition~\ref{prop:rho-kron}.
\end{proof}

\begin{corollary}\label{cor:monotonicity}
If $\varnothing\neq A\subseteq B\subseteq V$, then
\begin{equation}\label{eq:monotonicity}
        \rho_G(A)\le \rho_G(B).
\end{equation}
\end{corollary}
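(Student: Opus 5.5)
The plan is to derive monotonicity directly from the variational formula in Proposition~\ref{prop:optimization}, via extension by zero. Fix $\varnothing\neq A\subseteq B\subseteq V$. For any $u\in\R^A$ with $\one^{\mathsf T}u=1$, let $\hat u\in\R^B$ agree with $u$ on $A$ and vanish on $B\setminus A$. Two facts are immediate:
\begin{itemize}[label={}]
\item the constraint is preserved, $\one^{\mathsf T}\hat u=\one^{\mathsf T}u=1$;
\item since $R[A]$ is the principal submatrix of $R[B]$ indexed by $A$, we have $\hat u^{\mathsf T}R[B]\hat u=u^{\mathsf T}R[A]u$.
\end{itemize}
Hence the feasible set for $A$ embeds into the feasible set for $B$ with the same objective values.

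Taking suprema then gives
\[
\rho_G(A)=\max_{\one^{\mathsf T}u=1}u^{\mathsf T}R[A]u\le\max_{\one^{\mathsf T}w=1}w^{\mathsf T}R[B]w=\rho_G(B).
\]
Proposition~\ref{prop:optimization} is stated for every nonempty $S$, so both maxima exist.

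The degenerate cases are consistent with this chain:
\begin{itemize}[label={}]
\item if $|A|=1$, then $\rho_G(A)=0$, and $\rho_G(B)\ge0$ (either $|B|=1$, or $|B|\ge2$ and $\rho_G(B)>0$ as noted at the start of the section);
\item if $A=B$, there is nothing to prove.
\end{itemize}

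There is no substantive obstacle. The only point needing care is that the maximum in \eqref{eq:optimization} is attained and equals $\rho_G$ for every nonempty set, including the convention $\rho_G=0$ for singletons. That is exactly the content of Proposition~\ref{prop:optimization} together with Lemma~\ref{lem:normalized-optimization}.
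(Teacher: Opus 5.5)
Your proposal is correct and is essentially the paper's own proof: extending feasible vectors for $A$ by zero gives feasible vectors for $B$ with the same objective value, so the maximum in Proposition~\ref{prop:optimization} can only grow. Your separate treatment of singletons is harmless but unnecessary, since Proposition~\ref{prop:optimization} already covers $|S|=1$ (there the only feasible vector is $u=1$ and the value is $0$).
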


\begin{proof}
Extend vectors in $\R^A$ by zero to vectors in $\R^B$.  The feasible set for the maximization defining $\rho_G(A)$ is then contained in the feasible set defining $\rho_G(B)$.
\end{proof}

Monotonicity of the corresponding resistance-radius quantity already appears in Devriendt's framework and, for tree distance matrices, in Richman, Shokrieh, and Wu \cite{Devriendt2022Thesis,RichmanShokriehWuPrincipal}.  The point of the next results is the exact equality criterion and one-point update formula in the present boundary notation.

The next result gives the exact equality condition in Corollary~\ref{cor:monotonicity}.

\begin{theorem}\label{thm:equality-monotonicity}
Let $\varnothing\neq A\subseteq B\subseteq V$.  Let $u_B$ be the unique maximizer for $B$ in \eqref{eq:optimization} if $|B|\ge2$, and let $u_B=1$ if $|B|=1$.  Then
\begin{equation}\label{eq:equality-support}
        \rho_G(A)=\rho_G(B)
        \quad\Longleftrightarrow\quad
        u_B(v)=0\;\text{ for every }v\in B\setminus A.
\end{equation}
\end{theorem}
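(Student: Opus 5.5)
The plan is to argue directly from the variational characterization in Proposition~\ref{prop:optimization}, using uniqueness of the maximizer. Throughout, vectors in $\R^A$ are identified with their zero extensions in $\R^B$, as in the proof of Corollary~\ref{cor:monotonicity}. The key observation is that for such an extension $\widetilde u$ we have $\widetilde u^{\mathsf T}R[B]\widetilde u=u^{\mathsf T}R[A]u$, because $R[A]$ is the principal submatrix of $R[B]$ indexed by $A$. Moreover $\one^{\mathsf T}\widetilde u=\one^{\mathsf T}u$. So the feasible set for $A$ embeds in the feasible set for $B$, and the objective values agree on the embedded set.

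First I would dispose of the degenerate cases. If $|B|=1$, then $A=B$, so both sides of \eqref{eq:equality-support} hold trivially. Next suppose $|B|\ge2$ and $|A|=1$, say $A=\{a\}$. Then $\rho_G(A)=0$ while $\rho_G(B)>0$ by the remarks preceding Proposition~\ref{prop:optimization}, so the left side fails. For the right side, vanishing of $u_B$ off $A$ together with $\one^{\mathsf T}u_B=1$ would force $u_B=e_a$. But then $u_B^{\mathsf T}R[B]u_B=R_{aa}=0\neq\rho_G(B)$, so the right side fails as well. In particular, once $|B|\ge2$ the argument below needs no separate treatment of $|A|=1$.

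For the main case $|B|\ge2$, I would prove the two implications separately.

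($\Leftarrow$) Suppose $u_B$ vanishes on $B\setminus A$. Then $u_B$ is the zero extension of its restriction $u_B|_A$, which satisfies $\one^{\mathsf T}u_B|_A=1$. Hence
\[
\rho_G(B)=u_B^{\mathsf T}R[B]u_B=(u_B|_A)^{\mathsf T}R[A]\,u_B|_A\le\rho_G(A).
\]
Combined with Corollary~\ref{cor:monotonicity}, this gives equality.

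($\Rightarrow$) Suppose $\rho_G(A)=\rho_G(B)$. If $|A|\ge2$, let $u_A$ be the maximizer for $A$; if $|A|=1$, set $u_A=1$. In either case $u_A^{\mathsf T}R[A]u_A=\rho_G(A)$ by \eqref{eq:optimization}. The zero extension $\widetilde u_A$ is feasible for $B$ and attains the value $\rho_G(A)=\rho_G(B)$, so it is a maximizer for $B$. By the uniqueness part of Proposition~\ref{prop:optimization}, which comes from strict concavity in Lemma~\ref{lem:normalized-optimization}, we get $\widetilde u_A=u_B$. Therefore $u_B$ vanishes on $B\setminus A$.

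There is no real obstacle in this argument. The only points needing care are that the objective is preserved under zero extension, and that uniqueness of the maximizer is used for $B$ rather than for $A$. It is also worth noting that the $|A|=1$ instance of ($\Rightarrow$) is vacuous, since it was shown above that equality cannot hold there when $|B|\ge2$.
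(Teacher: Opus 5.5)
Your proof is correct and follows essentially the same route as the paper. Zero extension embeds the $A$-problem into the $B$-problem; uniqueness of the maximizer for $B$ then gives ($\Rightarrow$), and restricting $u_B$ to $A$ together with Corollary~\ref{cor:monotonicity} gives ($\Leftarrow$). Your separate treatment of $|A|=1$ is harmless but unnecessary, since the general argument already covers it once you take $u_A=1$ for singletons.
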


\begin{proof}
The statement is trivial when $A=B$, so assume $A\subsetneq B$.  Let $\widehat u_A\in\R^B$ be the extension of the optimizer $u_A$ by zero outside $A$.  Then $\widehat u_A$ is feasible for the $B$-problem and has objective value $\rho_G(A)$.

If $\rho_G(A)=\rho_G(B)$, then $\widehat u_A$ is also a maximizer for the $B$-problem.  By strict concavity on the feasible hyperplane, the maximizer is unique; hence $u_B=\widehat u_A$, and $u_B$ vanishes on $B\setminus A$.  Conversely, if $u_B$ is supported on $A$, then its restriction to $A$ is feasible for the $A$-problem and has objective value $\rho_G(B)$.  Hence $\rho_G(A)\ge\rho_G(B)$, while the reverse inequality follows from monotonicity.
\end{proof}

\subsection{One-point update formula}

We now give an exact formula for adding a single vertex.

\begin{theorem}\label{thm:one-point}
Let $S\subseteq V$ with $|S|\ge2$, and let $x\in V\setminus S$.  Put
\[
        A=R[S],\qquad b=R[S,x]\in\R^S,
\]
where $b_s=R_{sx}$.  Define
\[
        a=b^{\mathsf T}A^{-1}\one,
        \qquad
        \eta=b^{\mathsf T}A^{-1}b.
\]
Then $\eta>0$ and
\begin{equation}\label{eq:one-point-inverse}
        \frac1{\rho_G(S\cup\{x\})}
        =\frac1{\rho_G(S)}-\frac{(a-1)^2}{\eta}.
\end{equation}
Equivalently,
\begin{equation}\label{eq:one-point-rho}
        \rho_G(S\cup\{x\})
        =\frac{\rho_G(S)\eta}{\eta-\rho_G(S)(a-1)^2}.
\end{equation}
Moreover, if $u_{S\cup\{x\}}$ denotes the optimizer for $S\cup\{x\}$, then
\begin{equation}\label{eq:new-coordinate}
        u_{S\cup\{x\}}(x)
        =\rho_G(S\cup\{x\})\frac{a-1}{\eta}.
\end{equation}
\end{theorem}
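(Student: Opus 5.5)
The plan is to work with the bordered matrix
\[
        M=R[S\cup\{x\}]=\begin{pmatrix} A & b\\ b^{\mathsf T} & 0\end{pmatrix},
\]
and to read everything off its Schur complement with respect to $A$. Lemma~\ref{lem:normalized-optimization} together with \eqref{eq:cof-inv} gives, for any $T$ with $|T|\ge2$,
\[
        \frac1{\rho_G(T)}=\one^{\mathsf T}R[T]^{-1}\one .
\]
Both $A$ and $M$ are invertible by Lemma~\ref{lem:negative-type}. So the update formula should reduce to computing $\one^{\mathsf T}M^{-1}\one$ by block inversion.

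\textbf{Step 1: the sign of $\eta$.} The Schur complement of $A$ in $M$ is the scalar $\sigma=0-b^{\mathsf T}A^{-1}b=-\eta$. By Lemma~\ref{lem:negative-type}, $A$ has inertia $(1,k-1,0)$ and $M$ has inertia $(1,k,0)$, where $k=|S|$. Haynsworth's inertia additivity says that the inertia of $M$ equals the inertia of $A$ plus that of $\sigma$. Hence $\sigma$ must be negative, so $\sigma\neq0$ and $\eta>0$. I expect this to be the only conceptual point. If one wants to avoid citing Haynsworth, one can instead use $\det M=\det A\cdot\sigma$ together with the sign pattern $\operatorname{sign}\det R[T]=(-1)^{|T|-1}$, which follows from the inertia in Lemma~\ref{lem:negative-type}.

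\textbf{Step 2: block inversion.} With $\sigma=-\eta$, the standard block formula gives
\[
        M^{-1}=\begin{pmatrix} A^{-1}+\sigma^{-1}A^{-1}bb^{\mathsf T}A^{-1} & -\sigma^{-1}A^{-1}b\\ -\sigma^{-1}b^{\mathsf T}A^{-1} & \sigma^{-1}\end{pmatrix}.
\]
Pairing with $\one$ on both sides then yields
\[
        \one^{\mathsf T}M^{-1}\one=\one^{\mathsf T}A^{-1}\one+\sigma^{-1}(a-1)^2 .
\]
Using $\one^{\mathsf T}A^{-1}\one=1/\rho_G(S)$, this is exactly \eqref{eq:one-point-inverse}. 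Formula \eqref{eq:one-point-rho} is algebraic rearrangement. Its denominator is nonzero, indeed positive, because $\rho_G(S\cup\{x\})>0$ by Proposition~\ref{prop:rho-kron}.

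\textbf{Step 3: the new coordinate.} By \eqref{eq:u-star-inverse}, the optimizer for $S\cup\{x\}$ is $\rho_G(S\cup\{x\})M^{-1}\one$. Its $x$-coordinate is $\rho_G(S\cup\{x\})$ times the last row of $M^{-1}$ applied to $\one$. That row sum is $\sigma^{-1}(1-b^{\mathsf T}A^{-1}\one)=(a-1)/\eta$, which gives \eqref{eq:new-coordinate}.
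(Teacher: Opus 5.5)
Your proposal is correct and follows the paper's proof essentially step for step: the bordered matrix $R[S\cup\{x\}]$, inertia additivity applied to the Schur complement $-\eta$ to get $\eta>0$, the block-inverse formula summed against $\one$, and the last coordinate of $R[S\cup\{x\}]^{-1}\one$ scaled by $\rho_G(S\cup\{x\})$. Your two extras are small valid additions the paper leaves implicit: the determinant-sign alternative to Haynsworth, and the observation that the denominator in \eqref{eq:one-point-rho} is positive.
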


\begin{proof}
With the ordering in which $x$ is last,
\[
        R[S\cup\{x\}]=
        \begin{pmatrix}
        A & b\\
        b^{\mathsf T} & 0
        \end{pmatrix}.
\]
By Lemma~\ref{lem:negative-type}, $A$ has inertia $(1,|S|-1,0)$ and $R[S\cup\{x\}]$ has inertia $(1,|S|,0)$.  The Schur complement of $A$ in the displayed block matrix is $-b^{\mathsf T}A^{-1}b=-\eta$.  Inertia additivity for Schur complements therefore implies $-\eta<0$, hence $\eta>0$.

The block inverse formula gives
\[
        R[S\cup\{x\}]^{-1}=\begin{pmatrix}
        A^{-1}-\eta^{-1}A^{-1}bb^{\mathsf T}A^{-1} & \eta^{-1}A^{-1}b\\
        \eta^{-1}b^{\mathsf T}A^{-1} & -\eta^{-1}
        \end{pmatrix}.
\]
Summing all entries yields
\[
\begin{aligned}
        \one^{\mathsf T}R[S\cup\{x\}]^{-1}\one
        &=\one^{\mathsf T}A^{-1}\one
          -\frac{(b^{\mathsf T}A^{-1}\one)^2}{\eta}
          +2\frac{b^{\mathsf T}A^{-1}\one}{\eta}
          -\frac1\eta \\
        &=\frac1{\rho_G(S)}-\frac{(a-1)^2}{\eta}.
\end{aligned}
\]
Using $\one^{\mathsf T}R[T]^{-1}\one=1/\rho_G(T)$ proves \eqref{eq:one-point-inverse}.  The formula \eqref{eq:one-point-rho} is an algebraic rearrangement.

For the final statement, the last component of $R[S\cup\{x\}]^{-1}\one$ is $(a-1)/\eta$.  Multiplying by $\rho_G(S\cup\{x\})$, as in \eqref{eq:u-star-inverse}, gives \eqref{eq:new-coordinate}.
\end{proof}

\begin{corollary}\label{cor:one-point-equality}
Let $S\subseteq V$ with $|S|\ge2$, let $x\notin S$, and use the notation of Theorem~\ref{thm:one-point}.  Then
\begin{equation}\label{eq:one-point-equality}
        \rho_G(S\cup\{x\})=\rho_G(S)
        \quad\Longleftrightarrow\quad
        a=1
        \quad\Longleftrightarrow\quad
        u_{S\cup\{x\}}(x)=0.
\end{equation}
More precisely, the marginal increment is
\begin{equation}\label{eq:one-point-increment}
        \rho_G(S\cup\{x\})-\rho_G(S)
        =\rho_G(S)\rho_G(S\cup\{x\})\frac{(a-1)^2}{\eta}.
\end{equation}
If $u_S$ is the optimizer for $S$, then the same condition can be written as
\begin{equation}\label{eq:potential-equality}
        \sum_{s\in S}u_S(s)R_{xs}=\rho_G(S).
\end{equation}
\end{corollary}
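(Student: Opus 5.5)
The plan is to read everything off Theorem~\ref{thm:one-point}. Write $\rho=\rho_G(S)$ and $\rho'=\rho_G(S\cup\{x\})$. Both are positive, since $|S|\ge2$ and Proposition~\ref{prop:rho-kron} gives $\rho_G(T)>0$ whenever $|T|\ge2$. Subtracting the reciprocal identity \eqref{eq:one-point-inverse} from $1/\rho$ gives
\[
\frac1\rho-\frac1{\rho'}=\frac{(a-1)^2}{\eta}.
\]
Multiplying through by $\rho\rho'$ yields the increment formula \eqref{eq:one-point-increment}.

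For the first equivalence in \eqref{eq:one-point-equality}, note that $\rho\rho'>0$ and $\eta>0$ (the latter from Theorem~\ref{thm:one-point}). Hence the increment vanishes exactly when $(a-1)^2=0$, that is, when $a=1$.

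For the second equivalence, use \eqref{eq:new-coordinate}: $u_{S\cup\{x\}}(x)=\rho'(a-1)/\eta$. Since $\rho'\neq0$ and $\eta\neq0$, this coordinate vanishes iff $a=1$. As a cross-check, this also agrees with Theorem~\ref{thm:equality-monotonicity} applied to $A=S$, $B=S\cup\{x\}$.

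It remains to rewrite $a=1$ as \eqref{eq:potential-equality}. By \eqref{eq:u-star-inverse}, $u_S=\rho A^{-1}\one$, where $A=R[S]$ is symmetric and invertible (Lemma~\ref{lem:negative-type}). Therefore
\[
\sum_{s\in S}u_S(s)R_{xs}=b^{\mathsf T}u_S=\rho\, b^{\mathsf T}A^{-1}\one=\rho a.
\]
Since $\rho>0$, the condition $\rho a=\rho$ holds iff $a=1$.

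There is no real obstacle here. The only point needing care is to justify each division: positivity of $\rho$ and $\rho'$ comes from $|S|\ge2$, and positivity of $\eta$ comes from the inertia argument in Theorem~\ref{thm:one-point}.
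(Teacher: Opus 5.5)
Your proposal is correct and follows the paper's proof essentially step for step. You obtain the increment formula by subtracting \eqref{eq:one-point-inverse} from $1/\rho_G(S)$ and clearing denominators, read the equivalences off \eqref{eq:one-point-inverse} and \eqref{eq:new-coordinate}, and get the potential form from $b^{\mathsf T}u_S=\rho_G(S)\,a$; your explicit justification of each division (positivity of $\rho_G(S)$, $\rho_G(S\cup\{x\})$ and $\eta$) is a small addition the paper leaves implicit.
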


\begin{proof}
The equivalence of the first three statements follows directly from \eqref{eq:one-point-inverse} and \eqref{eq:new-coordinate}.  Subtracting \eqref{eq:one-point-inverse} from $1/\rho_G(S)$ and clearing denominators gives \eqref{eq:one-point-increment}.  Finally, by \eqref{eq:u-star-inverse},
\[
        u_S=\rho_G(S)A^{-1}\one,
\]
so
\[
        \sum_{s\in S}u_S(s)R_{xs}=b^{\mathsf T}u_S=\rho_G(S)b^{\mathsf T}A^{-1}\one=\rho_G(S)a.
\]
Thus $a=1$ is equivalent to \eqref{eq:potential-equality}.
\end{proof}

\begin{remark}
When $S=\{s\}$, the matrix $R[S]$ is not invertible, so Theorem~\ref{thm:one-point} does not apply.  In this case the update is simply
\[
        \rho_G(\{s,x\})=\frac12R_{sx},
        \qquad \rho_G(\{s\})=0.
\]
\end{remark}

\section{Examples and non-convexity phenomena}\label{sec:examples}

\begin{example}\label{ex:complete}
Let $G=K_n$ with unit conductances.  The effective resistance between distinct vertices is $2/n$, so for every $S$ with $|S|=k\ge2$,
\[
        R[S]=\frac2n(J_k-I_k),
\]
where $J_k$ is the all-ones matrix and $I_k$ is the identity matrix.  The eigenvalues are $2(k-1)/n$ on the span of $\one$ and $-2/n$ on $\one^\perp$.  Hence
\[
        \rho_G(S)=\frac{2(k-1)}{nk}.
\]
This is increasing in $k$, in accordance with Corollary~\ref{cor:monotonicity}.  The optimizer is the uniform vector $u_S=k^{-1}\one$.
\end{example}

\begin{example}\label{ex:star}
Let $G$ be the unit-resistance star with center $0$ and leaves $1,2,3$.  Since $G$ is a tree, effective resistance is path distance.  In this example write $\rho=\rho_G$.  The values in Table~\ref{tab:star-values} follow by direct computation.

\begin{table}[ht]
\centering
\caption{Values of $\rho_G(S)$ for the unit-resistance star with center $0$ and leaves $1,2,3$.}
\label{tab:star-values}
\begin{tabular}{ll}
\toprule
$S$ & $\rho_G(S)$ \\
\midrule
$\{0,1\}$ & $1/2$ \\
$\{1,2\}$ & $1$ \\
$\{0,1,2\}$ & $1$ \\
$\{1,2,3\}$ & $4/3$ \\
$\{0,1,2,3\}$ & $3/2$ \\
$\{1\}$ & $0$ \\
\bottomrule
\end{tabular}
\end{table}

Supermodularity would require
\[
        \rho(A)+\rho(B)\le \rho(A\cap B)+\rho(A\cup B).
\]
Taking $A=\{0,1\}$ and $B=\{1,2\}$ gives
\[
        \rho(A)+\rho(B)=\frac32>1=\rho(A\cap B)+\rho(A\cup B),
\]
so supermodularity fails.

Submodularity would require the reverse inequality.  Taking $A=\{0,1,2\}$ and $B=\{1,2,3\}$ gives
\[
        \rho(A)+\rho(B)=1+\frac43=\frac73
        <\frac52=1+\frac32=\rho(A\cap B)+\rho(A\cup B),
\]
so submodularity also fails.  Hence monotonicity of $\rho_G$ is not the shadow of a global submodular or supermodular law.
\end{example}

\begin{example}\label{ex:update-interpretation}
Let $S\subseteq V$ with $|S|\ge2$ and let $u_S$ be the optimizer.  Define the potential generated by $u_S$ at a vertex $x$ by
\[
        P_S(x)=\sum_{s\in S}u_S(s)R_{xs}.
\]
Then Corollary~\ref{cor:one-point-equality} says
\[
        \rho_G(S\cup\{x\})=\rho_G(S)
        \quad\Longleftrightarrow\quad
        P_S(x)=\rho_G(S).
\]
If $P_S(x)\neq\rho_G(S)$, the increment is positive and its exact size is given by \eqref{eq:one-point-rho}.  Thus the equality set for monotonicity is an effective-resistance analogue of an equipotential set.
\end{example}

\section{Further comments}\label{sec:comments}

The results above separate the determinant of a principal resistance submatrix into two conceptually different factors.  The cofactor sum is enumerative: it is controlled by the ratio of the $S$-rooted forest enumerator to the spanning tree enumerator.  The normalized factor is potential-theoretic: it is a resistance-radius optimization problem and is most naturally evaluated on the Kron-reduced network.

For trees, the Kron-reduced expression can be expanded further because each pairwise effective resistance is a sum along a unique path.  This unique-path structure is responsible for the rooted-forest and floating-component formulas obtained for tree distance matrices in \cite{RichmanShokriehWuPrincipal}.  For general graphs, cycles introduce the denominator $\tau(G)$ into resistance entries, and a single positive forest-counting expression for $\det R[S]$ is not supplied by the present method; such an expression appears less direct without passing to ratios of forest polynomials.  The formula in Theorem~\ref{thm:det-principal} is therefore best viewed as a general network factorization: rooted forests supply the cofactor term, while Kron reduction supplies the intrinsic radius term.

The local formula \eqref{eq:one-point-inverse} may be useful computationally.  Once $R[S]^{-1}\one$ and $R[S]^{-1}$ are available, the effect of adding a candidate vertex $x$ can be computed from the two scalars $a$ and $\eta$.  The support criterion in Theorem~\ref{thm:equality-monotonicity} also shows that strict monotonicity is governed by the sign pattern of the optimizer $u_S$, which is closely related to the resistance-curvature viewpoint developed in \cite{Devriendt2022Thesis,DevriendtLambiotte2022}.

\section*{CRediT authorship contribution statement}
Guangfu Wang: Conceptualization, Methodology, Formal analysis, Investigation, Writing -- original draft, Writing -- review and editing, Funding acquisition.

\section*{Declaration of competing interest}
The author declares no known competing financial interests or personal relationships that could have appeared to influence the work reported in this paper.

\section*{Data availability}
No data were generated or analyzed during this study.

\section*{Declaration of generative AI and AI-assisted technologies in the manuscript preparation process}
During the preparation of this work the author used OpenAI ChatGPT to assist with language editing, journal-formatting checks, and LaTeX preparation.  After using this tool, the author reviewed and edited the content as needed and takes full responsibility for the content of the article.

\section*{Acknowledgements and funding}
This work was supported by the Natural Science Foundation of Shandong Province (Grant No. ZR2024MA073).

\end{document}